\tikzstyle{vertex}=[circle, draw, inner sep=0pt, minimum size=6pt]
\def \N {{\mathbb{N}}}
\def \ga {{\gamma}}
\newtheorem*{theorem*}{Theorem}
\newtheorem{theorem}{Theorem}
\newtheorem{lemma}[theorem]{Lemma}
\newtheorem{definition}[theorem]{Definition}
\newtheorem{rem}[theorem]{Remark}
\newtheorem*{ex*}{Example}
\title{$\gamma$-Chiral is same as Chiral}
\author {
Shrinit Singh\\
 Department of 
 Mathematics, Shiv Nadar Institution of Eminence, India-201314\\ (e-mail: 
 ss101@snu.edu.in).}
\date{}
\begin{document}
\maketitle
\begin{abstract}
A word $w$ in a free group is called {\em chiral} if there exists a group $G$ such that image of word map corresponding to word $w$ is not closed with respect to inverse. Similarly a group $G$ is said to be {\em chiral}  if there exists a word $w$ in free group such that $w$ exhibits chirality on the group $G$. Gordeev et al.  \cite{gordeev2018geometry} extended the concept of chirality to introduce $\gamma$-chirality in both cases. We show that the notion of $\gamma$-chirality is equivalent to chirality.
\end{abstract}
{\bf{Key Words}}: Free group, Word map, Chiral word \\
{\bf{AMS(2020)}}:20F10 \\

\section*{Introduction}
Let $F_d$ be a free group with a basis $x_1, \ldots, x_d$. By a word $w \in F_d$, we mean a reduced word in $F_d.$
Each word $w\in F_d$ defines a map from $G^d$ to $G$ which is an evaluation map. For example, if $w=x_1x_3^2\in F_3,$ then $w:G^3\to G$ defined as $w((g_1,g_2,g_3))=g_1g_3^2.$ We denote the image of the word map  $w(G^d)$ as $G_w.$ 

Let $G$ be a group. An anti-automorphism $\delta$ of the group $G$ is a bijective map from $G$ to $G$ such that $\delta(ab) = \delta(b)\delta(a)$ for all $a,b \in G$.
Let $AA(G)$ be the set all anti-automorphisms of $G$ and $A(G)$ be the set of all automorphisms of $G.$ Then it is easy to see that there is a one to one correspondence between $A(G)$ and $AA(G).$ In particular, if $f\in A(G),$ then $g:G\to G$ defined as $g(x)=f(x^{-1})$ belongs to $AA(G)$ and vice versa. 

 In \cite{gordeev2018geometry}
Gordeev et al. generalized the concept of chirality  to  $\gamma$-chirality in following way.    
\begin{definition}[\cite{gordeev2018geometry}]
 \begin{enumerate}
  \item A word $w\in F_d$ is a {\em $\gamma$-chiral}, where $\gamma \in AA(F_d)$, if there exists a group $G$ such that $G_w\ne G_{\gamma(w)}.$ In this case, chirality of word $w$ is a special case of $\gamma$-chirality for $\gamma(u) = u^{-1}$ for all $u \in F_d.$  
  \item Let $G$ be a group, and let $\gamma \in AA(G)$. For every word $w \in F_n$, define $w_{\gamma} : G^{(n)} \rightarrow G$ by $w_{\gamma}(g_1, \cdots , g_n) = \gamma(w(g_1, \cdots , g_n))$. Then $G$ is said to be $\gamma$-chiral if there exists a word $w \in F_d$ for some $d$ such that the images of $w$ and $w_{\gamma}$ are different. In this case also, chirality of group $G$ is a special case of $\gamma$-chirality for $\gamma(g)=g^{-1}$ for all $g \in G$.
 \end{enumerate}
\end{definition}

In this work, our goal is to establish that a word or group is chiral if and only if it is $\gamma$-chiral for an anti-automorphism $\gamma$ if and only if it is $\gamma$-chiral for all anti-automorphism $\gamma$.

\section*{Main Result}
First we will prove a key lemma.

\begin{lemma}\label{lem}
    Let $F_d$ and $G$ be a free group with basis $x_1, x_2, \ldots, x_d$ and a group respectively. Let $\theta$ and $\zeta$ be automorphisms of $F_d$ and $G$ respectively. Then for a word $w \in F_d$, we have $G_w = G_{\theta(w)} = \zeta(G_w).$ 
\end{lemma}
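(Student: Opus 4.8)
The plan is to treat the two equalities separately, and in each case to reduce the statement about images of word maps to a bijective change of variables. The underlying principle is that both an automorphism $\theta$ of $F_d$ and an automorphism $\zeta$ of $G$ interact with the evaluation map $w : G^d \to G$ through substitution identities. Specifically, writing $\theta(x_i) = u_i$ for words $u_i \in F_d$, the homomorphism property of $\theta$ gives $\theta(w) = w(u_1, \ldots, u_d)$, so that evaluating $\theta(w)$ at a tuple $(g_1, \ldots, g_d)$ yields $w(u_1(g_1,\ldots,g_d), \ldots, u_d(g_1,\ldots,g_d))$. Likewise, since $\zeta$ is a homomorphism it commutes with word evaluation: $\zeta(w(g_1,\ldots,g_d)) = w(\zeta(g_1),\ldots,\zeta(g_d))$.

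For the first equality $G_w = G_{\theta(w)}$, I would introduce the induced substitution map $\Phi_\theta : G^d \to G^d$ defined by $\Phi_\theta(g_1,\ldots,g_d) = (u_1(g_1,\ldots,g_d),\ldots,u_d(g_1,\ldots,g_d))$, so that the substitution identity above reads $\theta(w) = w \circ \Phi_\theta$ as maps $G^d \to G$. The key point is that $\Phi_\theta$ is a bijection of $G^d$: one checks that $\Phi_{\psi \circ \theta} = \Phi_\theta \circ \Phi_\psi$ and $\Phi_{\mathrm{id}} = \mathrm{id}$, so that $\Phi_\theta \circ \Phi_{\theta^{-1}} = \Phi_{\theta^{-1} \circ \theta} = \mathrm{id}$, using that $\theta$ is invertible. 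Granting this, $G_{\theta(w)} = (w \circ \Phi_\theta)(G^d) = w(\Phi_\theta(G^d)) = w(G^d) = G_w$.

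For the second equality $G_w = \zeta(G_w)$, I would apply the commutation identity directly: $\zeta(G_w) = \{\zeta(w(g_1,\ldots,g_d)) : g_i \in G\} = \{w(\zeta(g_1),\ldots,\zeta(g_d)) : g_i \in G\}$. Since $\zeta$ is a bijection of $G$, the tuple $(\zeta(g_1),\ldots,\zeta(g_d))$ ranges over all of $G^d$ as $(g_1,\ldots,g_d)$ does, so this set is exactly $\{w(h_1,\ldots,h_d) : h_i \in G\} = G_w$.

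I expect the main obstacle to lie in the first equality, specifically in carefully justifying that the change of variables $\Phi_\theta$ is a bijection; this rests on the fact that automorphisms of the free group are precisely the invertible substitutions of the generators, so that $\theta^{-1}$ furnishes an explicit inverse substitution. Everything else is a routine unwinding of the definition of the word map and the homomorphism property.
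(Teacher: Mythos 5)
Your proof is correct and takes essentially the same route as the paper: both equalities are handled by the change-of-variables induced by the automorphism, using invertibility of $\theta$ (resp.\ $\zeta$) to see that the substituted tuples sweep out all of $G^d$. Your packaging of the first equality via the substitution map $\Phi_\theta$ and the identity $\Phi_\theta\circ\Phi_{\theta^{-1}}=\Phi_{\mathrm{id}}$ is a slightly more explicit rendering of the paper's identity $\theta\bigl(w(\theta^{-1}(x_1),\ldots,\theta^{-1}(x_d))\bigr)=w$, but the underlying argument is the same.
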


\begin{proof}
    Let $w(g_1, g_2,\ldots, g_d) = g \in G_w$ for $g, g_1, g_2, \ldots, g_d \in G.$ It is easy to see that
    $$\theta(w(\theta^{-1}(x_1), \theta^{-1}(x_2), \ldots, \theta^{-1}(x_d)) = w.$$
    Hence $$\theta(w(\theta^{-1}(x_1)(g_1,\ldots, g_d), \theta^{-1}(x_2)(g_1, \ldots, g_d), \ldots, \theta^{-1}(x_d)(g_1, \ldots, g_d))) = w(g_1, \ldots, g_d).$$
    Therefore $G_w = G_{\theta(w)}$.
    
    We also have
    $$\zeta(w(\zeta^{-1}(g_1), \zeta^{-1}(g_2), \ldots, \zeta^{-1}(g_d)) = g.$$ Hence $\zeta(G_w) \subseteq G_w.$ Similarly  $\zeta^{-1}(G_w) \subseteq G_w.$ Applying $\zeta$ will give $\zeta(G_w) = G_w.$ Hence $G_w = G_{\theta(w)} = \zeta(G_w).$
\end{proof}
\begin{theorem}\label{thm: thm1}
 Let $w\in F_d,$ for some $d\in \N$ and $\gamma\in AA(F_d).$ Then $w$ is chiral if only if  $w$ is $\gamma$-chiral. 
\end{theorem}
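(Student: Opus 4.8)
The plan is to reduce $\gamma$-chirality to ordinary chirality by exploiting the correspondence between anti-automorphisms and automorphisms noted in the introduction, together with Lemma~\ref{lem}, which does essentially all of the work.

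First I would single out the inversion map $\iota : F_d \to F_d$, $\iota(u) = u^{-1}$, which is the anti-automorphism defining ordinary chirality. Given an arbitrary $\gamma \in AA(F_d)$, I would set $\theta := \gamma \circ \iota$, that is $\theta(u) = \gamma(u^{-1})$, and verify that $\theta$ is an automorphism of $F_d$: for all $u, v \in F_d$,
$$\theta(uv) = \gamma\big((uv)^{-1}\big) = \gamma(v^{-1}u^{-1}) = \gamma(u^{-1})\gamma(v^{-1}) = \theta(u)\theta(v),$$
and bijectivity is inherited from $\gamma$ and $\iota$. This is exactly the $A(F_d) \leftrightarrow AA(F_d)$ correspondence recalled in the introduction. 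Since $\iota$ is its own inverse, this gives the factorization $\gamma = \theta \circ \iota$, so that $\gamma(w) = \theta(w^{-1})$ for every word $w \in F_d$.

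Next I would apply Lemma~\ref{lem} with the automorphism $\theta$ of $F_d$ to the word $w^{-1}$. The lemma yields $G_{\theta(w^{-1})} = G_{w^{-1}}$ for \emph{every} group $G$. Combining this with the factorization from the previous step gives
$$G_{\gamma(w)} = G_{\theta(w^{-1})} = G_{w^{-1}} \qquad \text{for every group } G.$$
In other words, the image of the word map attached to $\gamma(w)$ coincides with the image attached to $w^{-1}$ in every group, regardless of which anti-automorphism $\gamma$ we started from.

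Finally, I would unwind the definitions. Ordinary chirality uses precisely the anti-automorphism $\iota(u) = u^{-1}$, so $w$ is chiral if and only if there exists a group $G$ with $G_w \neq G_{w^{-1}}$, whereas $w$ is $\gamma$-chiral if and only if there exists a group $G$ with $G_w \neq G_{\gamma(w)}$. Because $G_{\gamma(w)} = G_{w^{-1}}$ for every $G$, these two existential statements are literally the same, which proves the equivalence in both directions. I do not anticipate any genuine obstacle here: all the content is concentrated in the factorization $\gamma = \theta \circ \iota$ and in Lemma~\ref{lem}; the only point deserving a moment's care is checking that $\theta = \gamma \circ \iota$ is an honest automorphism and not merely a bijection, which the computation above settles.
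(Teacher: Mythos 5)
Your proposal is correct and follows essentially the same route as the paper: both factor $\gamma$ as an automorphism $\theta$ composed with inversion so that $\gamma(w)=\theta(w^{-1})$, invoke Lemma~\ref{lem} to get $G_{\gamma(w)}=G_{w^{-1}}$ for every group $G$, and then observe that the two existential conditions coincide. The only difference is cosmetic: you verify explicitly that $\theta=\gamma\circ\iota$ is an automorphism, whereas the paper simply cites the $A(F_d)\leftrightarrow AA(F_d)$ correspondence from the introduction.
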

\begin{proof}
First we start with a small observation. If  $\gamma\in AA(F_d),$
then there exists $\theta\in A(F_d)$ such that $\theta(w^{-1})=\gamma(w).$ But we have  $G_{\theta(w)}=G_w$ by lemma \ref{lem}. Hence, for every group $G,$ $G_{\gamma(w)}=G_{w^{-1}}.$

Suppose $w\in F_d$ is chiral. Then there is a group $G$ such that $G_w\ne G_{w^{-1}}.$ Consequently $G_w\ne G_{\gamma(w)}$ for every $\gamma\in AA(F_d).$ Hence $w$ is $\gamma$-chiral for every $\gamma\in AA(G).$

If the word is $\gamma$-chiral, then $G_w \neq G_{\gamma(w)}.$ But $G_{\gamma(w)} = G_{w^{-1}}$. This implies $G_w \neq G_{w^{-1}}$ proving that $w$ is chiral.
\end{proof}

Now we prove the equivalence for groups. We follow the same steps as in the above proof. Suppose $G$ is a group and $\gamma$ is an anti-automorphism of the group $G$. Then there exists an automorphism $\zeta$ of $G$ such that $\zeta(g^{-1}) = \gamma(g)$ for all  $g \in G$. Images of word maps are preserved by automorphism by lemma \ref{lem}, so we have $G_{w} = \zeta{(G_{w}})$.

Assume $G$ is $\gamma$-chiral, then there exists a word $w$ such that $G_w \ne \gamma(G_w).$ Thus $\gamma(G_w) = \zeta(G_w)^{-1} = \zeta(G_{w^{-1}}) = G_{w^{-1}}$.   This implies $G_w \ne G_{w^{-1}}$ proving $G$ is chiral. We can reverse the process to prove the converse. Hence we proved the following.
\begin{theorem}
 Let $G$ be a group and let $\ga\in AA(G).$ Then $G$ is chiral  if and only if $G$ is $\ga$-chiral. 
\end{theorem}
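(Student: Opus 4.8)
The plan is to mirror the structure of Theorem \ref{thm: thm1} exactly, transferring the argument from the word side (automorphisms of $F_d$, handled by $\theta$) to the group side (automorphisms of $G$, handled by $\zeta$). The single computational fact I need is the translation of an arbitrary anti-automorphism into an automorphism composed with inversion: given $\gamma \in AA(G)$, I would define $\zeta : G \to G$ by $\zeta(g) = \gamma(g^{-1})$ and verify that $\zeta \in A(G)$, using the bijective correspondence between $A(G)$ and $AA(G)$ already recorded in the introduction. This gives the key identity $\gamma(g) = \zeta(g^{-1})$ for all $g \in G$, which is the group-side analogue of the opening observation $\theta(w^{-1}) = \gamma(w)$ in the previous proof.

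Next I would compute how $\gamma$ acts on the image $G_w$. Applying the identity pointwise, $\gamma(G_w) = \zeta(G_w^{-1})$, where $G_w^{-1} = \{g^{-1} : g \in G_w\}$. I would then observe that $G_w^{-1} = G_{w^{-1}}$, since $w(g_1,\dots,g_d)^{-1} = w^{-1}(g_1,\dots,g_d)$ for every tuple, so inverting the output of the word map $w$ is exactly evaluating the word map $w^{-1}$. Combining these with Lemma \ref{lem}, which guarantees $\zeta(G_{w^{-1}}) = G_{w^{-1}}$ because $\zeta$ is an automorphism, yields the chain $\gamma(G_w) = \zeta(G_w^{-1}) = \zeta(G_{w^{-1}}) = G_{w^{-1}}$.

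With that chain in hand the logical equivalence is immediate and runs in both directions. If $G$ is $\gamma$-chiral, there is a word $w$ with $G_w \neq \gamma(G_w)$; substituting $\gamma(G_w) = G_{w^{-1}}$ gives $G_w \neq G_{w^{-1}}$, which is precisely the statement that $w$ witnesses the chirality of $G$. Conversely, if $G$ is chiral there is a word $w$ with $G_w \neq G_{w^{-1}}$, and reversing the same substitution gives $G_w \neq \gamma(G_w)$, so $G$ is $\gamma$-chiral; since $\gamma$ was arbitrary, chirality in fact implies $\gamma$-chirality for every $\gamma \in AA(G)$.

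I do not anticipate a genuine obstacle here, as the argument is a faithful transposition of the word-map proof. The only point demanding a little care is verifying that $\zeta$ defined by $\zeta(g) = \gamma(g^{-1})$ is indeed a homomorphism rather than merely a bijection: one checks $\zeta(ab) = \gamma((ab)^{-1}) = \gamma(b^{-1}a^{-1}) = \gamma(a^{-1})\gamma(b^{-1}) = \zeta(a)\zeta(b)$, where the third equality uses the anti-automorphism property of $\gamma$, so the order reversal of $\gamma$ cancels the order reversal of inversion. Once this is confirmed, Lemma \ref{lem} supplies the invariance of word-map images under $\zeta$ and the rest is bookkeeping.
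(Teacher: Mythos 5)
Your proposal is correct and follows essentially the same route as the paper: translate $\gamma$ into an automorphism $\zeta$ via $\gamma(g)=\zeta(g^{-1})$, use $G_w^{-1}=G_{w^{-1}}$ together with Lemma \ref{lem} to get $\gamma(G_w)=G_{w^{-1}}$, and read off the equivalence. The only difference is that you explicitly verify $\zeta$ is a homomorphism, which the paper leaves to the correspondence between $A(G)$ and $AA(G)$ stated in the introduction.
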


\begin{definition}
A finite group $G$ with an anti-automorphism $\gamma$ is said to be weakly $\gamma$-chiral if there exists $g \in G$ and $w \in F_d$ such that the cardinalities of fibres of $g$ with respect to word map $w$ and $w_\gamma$ are distinct. We say the pair is $(G,w)$ is weakly $\gamma$-chiral.   
\end{definition}

\begin{rem}
    It is worth noting that the property of weak $\gamma$-chirality is not dependent on the specific anti-automorphism $\gamma$ chosen.
\end{rem}

Gordeev et al. \cite{gordeev2018geometry} further posed an intriguing question whether there exists a group $G$ along with an anti-automorphism $\gamma$ of $G$ such that $G$ exhibits both $\gamma$-achirality and weakly $\gamma$-chirality. Notably, this question has been addressed by William Cocke \cite{William} for anti-automorphism defined by $\gamma(g) = g^{-1}$, but it is essential to emphasize that for all anti-automorphisms $\gamma$ considered, the property of weak $\gamma$-chirality remains invariant.

\bibliographystyle{plain}
\bibliography{SS_Zerowork}

\end{document}